\documentclass[11pt,reqno,twoside]{amsart}
\usepackage{graphicx}

\usepackage{subfigure}
\usepackage{amsmath,mathdots,amssymb,latexsym,amsthm,mathrsfs,epsfig}
\usepackage{multirow}
\usepackage{breqn}
\usepackage{float}
\usepackage{cite}
\usepackage{enumitem}
\usepackage{environ}
\usepackage{hyperref}
\usepackage{multicol}
\usepackage{mathtools}
\NewEnviron{myequation}{%
	\begin{equation}
		\scalebox{1.1}{$\BODY$}
	\end{equation}
}
\usepackage{color}
\usepackage{soul}
%

\linespread{2}
\renewcommand{\baselinestretch}{1.2}
\makeatletter
\newcommand{\single}{\let\CS=\@currsize\renewcommand{\baselinestretch}{1.1}\tiny\CS}
\newcommand{\singb}{\let\CS=\@currsize\renewcommand{\baselinestretch}{1}\tiny\CS}
\newcommand{\singa}{\let\CS=\@currsize\renewcommand{\baselinestretch}{1.2}\tiny\CS}
\newcommand{\oneandahalfspacing}{\let\CS=\@currsize\renewcommand{\baselinestretch}{1.5}\tiny\CS}
\newcommand{\singlespacing}{\let\CS=\@currsize\renewcommand{\baselinestretch}{1.6}\large\CS}
\newcommand{\bc}{\begin{center}}
	\newcommand{\ec}{\end{center}}
\newcommand{\be}{\begin{eqnarray}}
	\newcommand{\ee}{\end{eqnarray}}

\newcommand{\Hom}{\operatorname{Hom}}

\newcommand{\Dim}{\operatorname{dim}}
\newcommand{\Irr}{\operatorname{Irr}}
\newcommand{\Ind}{\operatorname{Ind}}

\newcommand{\Span}{\operatorname{Span}}
\newcommand{\Det}{\operatorname{det}}

\newcommand{\beano}{\begin{eqnarray*}}
	\newcommand{\eeano}{\end{eqnarray*}}

\newcommand{\ba}{\begin{array}}
	\newcommand{\ea}{\end{array}}

\parindent .3in
\pagestyle{myheadings}
\textwidth 6.5in
\oddsidemargin 0.08in
\evensidemargin 0.06in
\topmargin 0.2in
\textheight 8.4in
\makeatletter

\newtheorem{theorem}{Theorem}[section]
\newtheorem{corollary}[theorem]{Corollary}
\newtheorem{lemma}[theorem]{Lemma}
\newtheorem{proposition}[theorem]{Proposition}

\theoremstyle{definition}
\newtheorem{definition}{Definition}[section]

\numberwithin{equation}{section}

\DeclareMathOperator{\GL}{GL}

\DeclareMathOperator{\Par}{P}
\DeclareMathOperator{\G}{G}
\DeclareMathOperator{\F}{F}
\DeclareMathOperator{\N}{N}
\DeclareMathOperator{\Ha}{H}
\DeclareMathOperator{\M}{M}
\DeclareMathOperator{\V}{V}
\DeclareMathOperator{\JH}{JH}

\allowdisplaybreaks
\begin{document}
	
	\title[A NOTE ON JACQUET MODULES OF GENERAL LINEAR GROUPS]{A NOTE ON JACQUET MODULES OF GENERAL LINEAR GROUPS}
	
	
	\author[PREM DAGAR]{PREM DAGAR}
	\address{Department of Mathematics, Indian Institute of Technology Roorkee, Uttarakhand, 247667, Bharat (India)}
	\email{prem\textunderscore d@ma.iitr.ac.in}

	
	\author[MAHENDRA KUMAR VERMA]{MAHENDRA KUMAR VERMA}
	\address{Department of Mathematics, Indian Institute of Technology Roorkee, Uttarakhand, 247667,  Bharat (India)}
	\email{mahendraverma@ma.iitr.ac.in}
	\keywords{Jacquet modules, multiplicity free representations, symplectic groups}

\begin{abstract} 
	Let $\F$ be a non-Archimedean local field. Consider $\G_n:= \GL_n(\F)$ and let $\M:= \G_l\times \G_{n-l}$ be a maximal Levi subgroup of $\G_n$.~In this article, we compute the semisimplified Jacquet module of representations of $G_n$ with respect to the maximal Levi subgroup $\M$, belonging to a particular category of representations. Utilizing our results, we prove that the Jacquet module is multiplicity-free for a specific subcategory of representations. Our findings are based on the Zelevinsky classification.

\end{abstract}
	\maketitle

\section{Introduction}\label{sec1}
Let $\G_n=\GL_n(\F)$ be a general linear group over a non-Archimedean local field $\F$. For a group $\G$, let $\mathcal{M}(\G_n)$ denote the category of all smooth and admissible complex representations of the finite length of $G$. A valuable tool for studying the representations of $\G_n$ is the Jacquet module. Let $\Par$ denote a parabolic subgroup of $\G_n$ with a Levi decomposition $\Par = \M\N$, where $\M$ is a Levi subgroup and $\N$ is the corresponding unipotent subgroup. Let $(\pi, \V)$ be a smooth and admissible complex representation of the group $\G_n$.~~The Jacquet module of representation $\V$ with respect to $\N$ is $$ \V_{\N}=\V/\Span \{\pi(n)v-v; v\in \V,n\in \N\}. $$ It is the largest quotient of $\V$ on which $\N$ acts trivially. Here, the Levi subgroup $\M$ acts naturally  on $\V_{\N}$. So, the  Jacquet module functor, or simply say, Jacquet functor, is a functor from the category $\mathcal{M}(\G_n)$ of $\G_n$ to the category $\mathcal{M}(\M)$ of $\M$. Let denote this functor by  $r_{\M}^{\G_n}$. The Jacquet functor is an exact functor that is left adjoint to the induction functor, i.e., $$\Hom_{\G_n}(\pi,\Ind_{\Par}^{\G_n}(\sigma))\cong \Hom_{\M}(r_{\M}^{\G_n}(\pi),\sigma),$$ for a smooth representation $\sigma$ of the Levi subgroup $\M$. Here, $\Ind_{\Par}^{\G_n} (\sigma)$ denote the representation of $\G_n$, parabolically induced by $\sigma$ from $\Par$. Note that if the semisimplification of $r_{\M}^{\G_n}(\pi)$ is multiplicity free, i.e., any component appears as a quotient or as a subrepresentation at most one time in $r_{\M}^{\G_n}(\pi)_{ss}$, then $r_{\M}^{\G_n}(\pi)$ is also multiplicity free. So, we identify $r_{\M}^{\G_n}(\pi)$ with its semisimplification. We consider normalized  Jacquet and induction functors, i.e., they carry unitarizable representations to unitarizable ones. The Frobenius reciprocity indicates the importance of the knowledge of the Jacquet module of a representation. Much less is known about the Jacquet module of an arbitrary representation of $\G_n$. Within the framework of the Zelevinsky classification, we analyze the Jacquet module of some specific class of representations of the general linear group with respect to any maximal Levi subgroup. An important question arises here: when does the Jacquet module of a representation exhibit multiplicity-free characteristics. We may not fully resolve the question, but our analysis offers a partially qualified answer by giving a subcategory whose representations have a multiplicity-free Jacquet modules.

\begin{definition}
	Let $\Ha$ and $\Ha'$ be two reductive groups over field $\F$.~We call a functor $\mathcal{F}:\mathcal{M}(\Ha)\rightarrow \mathcal{M}(\Ha')$ a multiplicity free functor if for any	irreducible admissible representation $\pi\in \mathcal{M}(\Ha)$, the representation $\mathcal{F}(\pi)$ is multiplicity free representation, i.e., $\dim \Hom_{\Ha'}(\mathcal{F}(\pi), \tau )\leq 1,$  for any irreducible representation $\tau\in \mathcal{M}(\Ha').$
	
\end{definition}

In this article, we prove that the Jacquet functor $r_{\M}^{\G_n}$ is multiplicity-free for a subcategory of irreducible representations of $\G_n$. This problem's inspiration originates from the work of Aizenbud and  Gourevitch \cite{aizenbud2012multiplicity}. They proved that the Jacquet functor $r_{\M}^{\G_n}:\mathcal{M}(\G_{n+k})\rightarrow \mathcal{M}(\G_n\times\G_k)$ is multiplicity free for $k=1 \hspace{0.1cm}\text{or} \hspace{0.1cm}2$.
Some more multiplicity-free problems can be found in \cite{adler2006certain,aizenbud2009multiplicity,aizenbud2010multiplicity,aizenbud2009generalized,shalika1974multiplicity,sun2012multiplicity}.
To familiarize themselves with key notations and the representations central to our discourse, readers are directed to Section \ref{3.1}. Zelevinsky \cite{z80} computed the Jacquet module of the representations of type $Z( \Delta) $. Later, Kret and  Lapid \cite{kret2012jacquet} computed the Jacquet module of ladder representations. In this article, we compute the  Jacquet module of any representations of type $\pi=Z( \Delta_1)\times\cdots\times Z( \Delta_r)\in\mathcal{M}(\G_n)$ and prove that if the functor $ r_{\M}^{\G_n}$ is restricted to $\mathcal{M}_{\Irr}(\G_n)$ (see subsection \ref{3.1.2} for notation), then it is multiplicity free. Consequently, with same notation $ r_{\M}^{\G_n}$, we define the Jacquet functor  $$r_{\M}^{\G_n}:\mathcal{M}_{\Irr}(\G_n)\rightarrow \mathcal{M}(\M).$$

\begin{theorem}\label{1} The Jacquet functor $r_{\M}^{\G_n}:\mathcal{M}_{\Irr}(\G_n)\rightarrow \mathcal{M}(\M)$ is multiplicity free, i.e.,
	$$ \Dim_{\mathbb{C}}\Hom_{\M}(r_{\M}^{\G_n}(\pi), 
	\rho) \leq 1,$$
	for $\pi \in \mathcal{M}_{\Irr}(\G_n)$ and any irreducible representation $\rho\in \mathcal{M}(\M)$.
\end{theorem}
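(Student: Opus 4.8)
The plan is to reduce the multiplicity-freeness of $r_{\M}^{\G_n}$ on $\mathcal{M}_{\Irr}(\G_n)$ to a purely combinatorial statement about the geometric lemma applied to products of Zelevinsky segment representations. First, observe that any $\pi \in \mathcal{M}_{\Irr}(\G_n)$ is, by definition of the subcategory, of the form $\pi = Z(\Delta_1)\times\cdots\times Z(\Delta_r)$ where this product is already irreducible; in particular, by the Zelevinsky classification, the multiset $\{\Delta_1,\dots,\Delta_r\}$ consists of segments that are pairwise ``unlinked'' (no two of them are linked). The key structural input is then the Geometric Lemma of Bernstein--Zelevinsky: $r_{\M}^{\G_n}(Z(\Delta_1)\times\cdots\times Z(\Delta_r))$ has a filtration whose successive subquotients are parabolically induced from tensor products $r_{\M_1}(Z(\Delta_1))\otimes\cdots\otimes r_{\M_r}(Z(\Delta_r))$, where for each $i$ one splits $Z(\Delta_i)$ along a sub-Levi; by Zelevinsky's computation of the Jacquet module of $Z(\Delta)$, the only nonzero such pieces come from splitting each segment $\Delta_i$ at a single point into $\Delta_i^{-}$ and ${}^{-}\Delta_i$ (a ``prefix/suffix'' decomposition along the segment), contributing $Z(\Delta_i^{-})\otimes Z({}^{-}\Delta_i)$.

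Next I would set up the bookkeeping. Writing $\Delta_i = [a_i,b_i]$ (in the sense of segments of supercuspidals $\rho_i \nu^{a_i},\dots,\rho_i\nu^{b_i}$), a choice of splitting point for each $i$ is an integer $c_i$ with $a_i - 1 \le c_i \le b_i$, and the corresponding contribution to $r_{\M}^{\G_n}(\pi)$ (with $\M = \G_l \times \G_{n-l}$) lands in the correct block $\G_l\times\G_{n-l}$ only for those tuples $(c_i)$ with $\sum_i(c_i - a_i + 1) = l$. For each such admissible tuple, the contribution is the full induced representation $\big(Z(\Delta_1^{c_1})\times\cdots\times Z(\Delta_r^{c_r})\big)\,\boxtimes\,\big(Z({}^{c_1}\Delta_1)\times\cdots\times Z({}^{c_r}\Delta_r)\big)$ on $\G_l\times\G_{n-l}$. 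The crucial point is that because the original segments $\Delta_i$ are pairwise unlinked, both families of ``truncated'' segments $\{\Delta_i^{c_i}\}$ and $\{{}^{c_i}\Delta_i\}$ are again pairwise unlinked — truncating unlinked segments keeps them unlinked — so each of these two induced representations is itself irreducible. Hence the semisimplified Jacquet module is literally a direct sum, over admissible tuples $(c_i)$, of irreducible representations of $\M$, each occurring with the multiplicity equal to the number of times it arises this way.

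It then remains to show this direct sum is multiplicity free: distinct admissible tuples $(c_i)\ne(c_i')$ yield non-isomorphic pairs of $\M$-representations. This follows because an irreducible product $Z(\Theta_1)\times\cdots\times Z(\Theta_r)$ of unlinked segments determines the multiset $\{\Theta_1,\dots,\Theta_r\}$ uniquely (Zelevinsky's classification of irreducibles for $\G_l$), and from the pair of multisets $\big(\{\Delta_i^{c_i}\},\{{}^{c_i}\Delta_i\}\big)$ one recovers each individual splitting: the segments sharing a common supercuspidal support line $\rho_i\nu^{\mathbb Z}$ must be matched up (using that distinct $\rho_i$'s on the same line would have been linked or are handled by separating lines), and along a single line an unlinked — hence, since they share endpoints coming from a common parent configuration, actually \emph{nested or disjoint} — family of initial/final segments reconstructs the cut points. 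I expect this reconstruction step to be the main obstacle: one must carefully argue that even when several of the original $\Delta_i$ share the same supercuspidal line (so the $Z(\Delta_i)$ are unlinked but their truncations could a priori be permuted), the pairing between a left-piece and its matching right-piece is forced, so that no collision of tuples occurs. The cleanest route is to induct on $r$, peeling off the segment containing the ``extreme'' supercuspidal $\rho_i\nu^{b_i}$ of largest real part on each line and showing its truncation is recognizable in both multisets, thereby reducing to $r-1$ segments.
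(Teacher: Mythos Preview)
Your overall architecture matches the paper's: apply the geometric lemma, use Proposition~\ref{single} to see that each subquotient is indexed by a tuple of cut points $(p_i)$, show each resulting piece is irreducible, and then argue that distinct tuples give non-isomorphic pieces. However, there is a genuine gap stemming from a misreading of the definition of $\mathcal{M}_{\Irr}(\G_n)$.

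You take $\mathcal{M}_{\Irr}(\G_n)$ to consist of all irreducible products $Z(\Delta_1)\times\cdots\times Z(\Delta_r)$, i.e.\ those with pairwise unlinked $\Delta_i$. The paper's definition (\S\ref{3.1.2}) additionally requires $\Delta_i\not\subset\Delta_j$ for $i\neq j$. This extra hypothesis is essential at exactly the two places where your argument is loose. First, your assertion that ``truncating unlinked segments keeps them unlinked'' is false when one segment contains another: if $\Delta_1=[\nu^0\rho,\nu^4\rho]$ and $\Delta_2=[\nu^1\rho,\nu^3\rho]\subset\Delta_1$, then the prefixes $[\nu^0\rho,\nu^2\rho]$ and $[\nu^1\rho,\nu^3\rho]$ are linked, so the corresponding induced factor is reducible. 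This is precisely why Lemma~\ref{remark} carries the hypothesis $\Delta_1\not\subset\Delta_2$, $\Delta_2\not\subset\Delta_1$. Second, and more seriously, without the non-containment condition the theorem is simply false: take $\Delta_1=\Delta_2=[\rho,\nu\rho]$ with $\rho$ supercuspidal on $\G_1$, so $\pi=Z(\Delta_1)\times Z(\Delta_2)\in\Irr\G_4$. For $\M=\G_2\times\G_2$ the tuples $(p_1,p_2)=(2,0)$ and $(0,2)$ both contribute $Z([\rho,\nu\rho])\boxtimes Z([\rho,\nu\rho])$, giving multiplicity two. Thus no reconstruction argument of the kind you sketch can succeed in that generality.

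Once you impose $\Delta_i\not\subset\Delta_j$, the picture simplifies dramatically and your inductive ``peel off the extreme segment'' scheme is unnecessary. Unlinked plus non-nested forces the $\Delta_i$ on any fixed cuspidal line $\rho\nu^{\mathbb Z}$ to be pairwise disjoint and non-adjacent; in particular the left endpoints $a_i$ on each line are distinct. By Zelevinsky's classification the irreducible $\sigma_1^{(w)'}=Z([\nu^{a_1}\rho_1,\nu^{p_1-1}\rho_1])\times\cdots\times Z([\nu^{a_r}\rho_r,\nu^{p_r-1}\rho_r])$ determines its multiset of (nonempty) segments, and since each such segment has left endpoint equal to some $a_i$ that occurs for a unique index $i$, one reads off every $p_i$ directly from $\sigma_1^{(w)'}$ alone (indices not appearing have $p_i=a_i$). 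Hence distinct matrices $B(w)$ yield distinct irreducible constituents, which is exactly what the paper asserts.
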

\subsection{Remark} This result is more or less an 
observation that the semi-simplified Jacquet module of any $\pi \in \mathcal{M}_{\Irr}(\G_n)$ is multiplicity free. While broader classes of representations may yield similar results, our focus lies in establishing this property specifically for the class $\mathcal{M}_{\Irr}(\G_n)$. Furthermore, while this method is applicable to other reductive groups, our investigation concentrates solely on the general linear group.
\subsection{Organization} Section 2 gives the 
necessary preliminaries for the general linear group and introduces the notations we use throughout the article. In this section, we recall the geometric lemma. Section  $3$ introduces certain irreducible representations and discusses the irreducibility criteria for such representations. We conclude this section by computing the Jacquet module of some representations and proving Theorem \ref{1}.

\section{Preliminaries.}
This section introduces some basic notions and notations used throughout the article.
\subsubsection{}
Given $\pi\in \mathcal{M} (\G_n)$, we
denote by $\JH^0 (\pi),$ the set of irreducible quotients of the Jordan-Holder series of $\pi.$ Let $\Irr \G_n$ be the subcategory of $\mathcal{M}(\G_n)$, consisting of equivalence classes of irreducible admissible representations of $\G_n$. 
\subsubsection{}
Let $\eta:\G_n\rightarrow \G_n$ be an inner automorphism, then  define the functor $$\eta:\mathcal{M} (\G_n)\rightarrow \mathcal{M} (\G_n).$$  If $(\rho,\V)\in \mathcal{M} (\G_n),$ then the representation  $(\eta\rho,\V)\in  \mathcal{M} (\G_n)$ is defined by 
$$(\eta\rho)(g)v=\rho(\eta^{-1}g)v$$  for $g\in\G_n,v\in\V.$
\subsubsection{}
Let $\alpha=(n_1, \ldots, n_r)$ be a partition of $n$ and $\G_{\alpha}$ be the subgroup $\G_{n_1} \times \cdots \times \G_{n_r}$ of $\G_n$, embedded in $\G_n$ as the subgroup of block-diagonal matrices.~The sets of indices:

$$\mathcal{I}_1=\{1,2,\ldots ,n_1\},$$  $$\hspace{1.5cm}\mathcal{I}_2=\{n_1+1,
\ldots,n_1+n_2\}, $$ $$\vdots$$ $$\hspace{2.5cm}\mathcal{I}_r=\{n_1+\cdots+n_{r-1}+1,\ldots,n\}$$ are called as blocks of $\alpha$.
We say that partition $\beta$ is a subpartition of $\alpha$ if each block of $\beta$ is contained in some block of
$\alpha$. We denote this by $\beta\leqq\alpha$. 
If $\beta\leqq \alpha ,$ then $\G_{\beta} $ is a standard subgroup of $\G_{\alpha}$. Thus, we have the  defined induction and Jacquet functors:
$$ i_{\alpha,\beta}:\mathcal{M} (\G_\beta)\rightarrow \mathcal{M} (\G_\alpha) $$
$$\hspace{0.2cm}r_{{\beta},{\alpha}}:\mathcal{M} (\G_{\alpha})\rightarrow \mathcal{M} (\G_{\beta}).$$
These functors have the following properties:
\begin{theorem}[{\cite[Proposition 1.1]{z80}}] If $\alpha$, $\beta$ 
	and $\gamma$ be the partitions of $n$, then
	\begin{enumerate}
		\item[\upshape(1)] The functor  $r_{\beta,\alpha} $ is left adjoint to $i_{\alpha,\beta}$.
		\item[\upshape(2)] If $\gamma\leqq\beta \leqq \alpha, $ then $i_{\alpha,\beta} \circ  i_{\beta,\gamma}=i_{\alpha,\gamma}.$
	\end{enumerate}
\end{theorem}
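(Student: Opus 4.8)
The plan is to prove the two assertions separately: (1) is Frobenius reciprocity for the Jacquet functor, and (2) is induction in stages for parabolic induction. Throughout, let $Q=\G_\beta N_\beta$ be the standard parabolic subgroup of $\G_\alpha$ with Levi factor $\G_\beta$ and unipotent radical $N_\beta$, so that by definition $r_{\beta,\alpha}(\pi)=(\pi)_{N_\beta}\otimes\delta_Q^{-1/2}$ and $i_{\alpha,\beta}(\sigma)=\Ind_Q^{\G_\alpha}(\sigma\otimes\delta_Q^{1/2})$, where $(\pi)_{N_\beta}$ denotes the space of $N_\beta$-coinvariants and $\delta_Q$ the modulus character of $Q$.

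For (1), I would first invoke ordinary smooth Frobenius reciprocity for the unnormalized induction functor, giving a natural isomorphism $\Hom_{\G_\alpha}(\pi,\Ind_Q^{\G_\alpha}\tau)\cong\Hom_Q(\pi|_Q,\tau)$ for every smooth $Q$-representation $\tau$. Specializing to $\tau=\sigma\otimes\delta_Q^{1/2}$ inflated from $\G_\beta$, so that $N_\beta$ acts trivially on $\tau$, any $Q$-map $\pi|_Q\to\tau$ annihilates $\Span\{\pi(n)v-v\,;\,v\in\V,\,n\in N_\beta\}$ and therefore factors uniquely through the quotient $(\pi)_{N_\beta}$. By the universal property of the coinvariants this produces $\Hom_Q(\pi|_Q,\tau)\cong\Hom_{\G_\beta}((\pi)_{N_\beta},\sigma\otimes\delta_Q^{1/2})$, and twisting both arguments symmetrically by $\delta_Q^{-1/2}$ yields the normalized adjunction $\Hom_{\G_\alpha}(\pi,i_{\alpha,\beta}(\sigma))\cong\Hom_{\G_\beta}(r_{\beta,\alpha}(\pi),\sigma)$.

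For (2), assume $\gamma\leqq\beta\leqq\alpha$ and write $R=\G_\gamma N_\gamma$ for the standard parabolic of $\G_\beta$ with Levi $\G_\gamma$, and $P=\G_\gamma N$ for the standard parabolic of $\G_\alpha$ with Levi $\G_\gamma$. The geometric input is the elementary identity $P=R\cdot N_\beta$ with $N=N_\gamma\cdot N_\beta$; equivalently $P$ is the preimage of $R$ under the projection $Q\to\G_\beta=Q/N_\beta$, which for the block upper-triangular parabolics of $\GL_n$ is immediate from the nesting of the index blocks. I would then combine two standard facts: transitivity of unnormalized smooth induction along $P\subseteq Q\subseteq\G_\alpha$, namely $\Ind_Q^{\G_\alpha}\circ\Ind_P^Q\cong\Ind_P^{\G_\alpha}$, and the compatibility of inflation with induction, which rewrites $\Ind_Q^{\G_\alpha}$ applied to the inflation of $\Ind_R^{\G_\beta}(-)$ as $\Ind_P^{\G_\alpha}$ of the corresponding inflation. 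This identifies $i_{\alpha,\beta}(i_{\beta,\gamma}(\sigma))$ with an unnormalized induction from $P$, and it remains to reconcile the normalizing twists, i.e.\ to verify $\delta_P|_{\G_\gamma}=\delta_R\cdot(\delta_Q|_{\G_\gamma})$. This in turn follows from the $\G_\gamma$-stable direct sum decomposition $\mathfrak{n}=\mathfrak{n}_\gamma\oplus\mathfrak{n}_\beta$ of the Lie algebra of $N$, upon taking $|\det\operatorname{Ad}(m)|$ on each summand for $m\in\G_\gamma$.

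The routine core of both parts is the standard functoriality of smooth induction and of coinvariants. The step I expect to demand the most care is purely the bookkeeping of the modulus characters: one must track the factor $\delta_Q^{1/2}$ so that the normalization in (1) is symmetric on the two sides, and verify the multiplicativity $\delta_P^{1/2}=\delta_R^{1/2}\cdot\delta_Q^{1/2}$ on $\G_\gamma$ in (2), so that the normalized functors satisfy the adjunction and the transitivity exactly, with no residual twist.
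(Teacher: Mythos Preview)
The paper does not supply its own proof of this theorem: it is stated as a citation of \cite[Proposition~1.1]{z80} and used as a black box, so there is nothing in the paper to compare your argument against. Your sketch is the standard proof of these facts (Frobenius reciprocity via the universal property of coinvariants plus the modulus twist, and induction in stages via the nesting $P\subseteq Q\subseteq\G_\alpha$ together with the multiplicativity $\delta_P|_{\G_\gamma}=\delta_R\cdot\delta_Q|_{\G_\gamma}$), and it is correct as written.
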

\subsubsection{} If $\alpha=(n)$ and $\beta=(l,n-l),$ then we can simply denote $r_{\beta,\alpha} $ by $r_{\M}^{\G_n},$
where $\M:=\GL_l\times\GL_{n-l}$ is the maximal Levi subgroup of $\G_n.$

\subsubsection{}  Consider the functor of the tensor product
$$\otimes:\mathcal{M} (\G_{n_1})\times\cdots\times \mathcal{M} (\G_{n_r})\rightarrow \mathcal{M} (\G_\alpha)$$ defined by $$(\rho_1,\ldots,\rho_r)\mapsto\rho_1\otimes\cdots\otimes\rho_r. $$  The above functor induces a bijection $$\otimes:\Irr \G_{n_1}\times\cdots\times\Irr \G_{n_r}\hspace{0.2cm}\xrightarrow{\sim}\hspace{0.2cm}  \Irr \G_\alpha.$$  Let $\beta\leqq \alpha$ and $(\beta_1,\ldots,\beta_r)$ be the partition of $\beta$, induced by the blocks of $\alpha$. It is clear that $\G_\beta=\G_{\beta_1}\times\cdots\times \G_{\beta_r}$. 

\begin{proposition}[{\cite[Proposition 1.5]{z80}}]
	Let  $\pi_i \in \mathcal{M} (\G_{n_i})$ and   $\rho_i \in \mathcal{M} (\G_{\beta_i})$ with $~i=1,\ldots,r,$ then
	\begin{enumerate}
		\item[\upshape(1)] 
		$r_{\beta,\alpha}(\pi_1\otimes\cdots\otimes\pi_r)=r_{\beta_1,(n_1)}(\pi_1)\otimes\cdots\otimes r_{\beta_r,(n_r)}(\pi_r).$
		\item[\upshape(2)] 
		$i_{\alpha,\beta}(\rho_1\otimes\cdots\otimes\rho_r)=i_{(n_1),\beta_1}(\rho_1)\otimes\cdots\otimes i_{(n_r),\beta_r}(\rho_r). $
	\end{enumerate}
\end{proposition}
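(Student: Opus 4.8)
The plan is to reduce both statements to the single structural fact that every object in sight — the ambient group $\G_\alpha$, the relevant parabolic, its unipotent radical, and its modulus character — factors through the product decomposition indexed by the blocks of $\alpha$. First I would unpack the combinatorics: since $\beta\leqq\alpha$, each block of $\beta$ lies inside a unique block of $\alpha$, so the datum of $\beta$ is equivalent to the collection of induced subpartitions $\beta_i\leqq(n_i)$, and $\G_\beta=\G_{\beta_1}\times\cdots\times\G_{\beta_r}$ is block-diagonally embedded in $\G_\alpha=\G_{n_1}\times\cdots\times\G_{n_r}$. The standard parabolic $\Par_\beta\subseteq\G_\alpha$ with Levi $\G_\beta$ then factors as $\Par_\beta=\Par_{\beta_1}\times\cdots\times\Par_{\beta_r}$, where $\Par_{\beta_i}=\G_{\beta_i}\N_{\beta_i}\subseteq\G_{n_i}$; in particular its unipotent radical is $\N_\beta=\N_{\beta_1}\times\cdots\times\N_{\beta_r}$. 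Since $\G_\beta$ acts on $\N_\beta$ block by block, the modulus character satisfies $\delta_{\Par_\beta}=\delta_{\Par_{\beta_1}}\boxtimes\cdots\boxtimes\delta_{\Par_{\beta_r}}$, and likewise for its square root. I would isolate this product structure as the first step and let both parts follow from it.

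For part (2) I would argue directly from the definition of normalized parabolic induction. Realize $i_{\alpha,\beta}(\rho_1\otimes\cdots\otimes\rho_r)$ as the space of smooth functions $f\colon\G_\alpha\to V_{\rho_1}\otimes\cdots\otimes V_{\rho_r}$ satisfying the left $\Par_\beta$-equivariance twisted by $\delta_{\Par_\beta}^{1/2}$. Because $\G_\alpha$, $\Par_\beta$ and the twist all factor as products over $i$, such an $f$ is determined by, and decomposes as a tensor product of, functions $f_i\colon\G_{n_i}\to V_{\rho_i}$ each satisfying the $\Par_{\beta_i}$-equivariance twisted by $\delta_{\Par_{\beta_i}}^{1/2}$. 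This yields a natural isomorphism $i_{\alpha,\beta}(\rho_1\otimes\cdots\otimes\rho_r)\cong i_{(n_1),\beta_1}(\rho_1)\otimes\cdots\otimes i_{(n_r),\beta_r}(\rho_r)$ that intertwines the two $\G_\alpha$-actions, giving (2).

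For part (1) I would compute the normalized Jacquet module directly as well. By definition $r_{\beta,\alpha}(\pi)$ is $\delta_{\Par_\beta}^{-1/2}$ applied to the space of $\N_\beta$-coinvariants $V_\pi/\Span\{\pi(n)v-v:n\in\N_\beta\}$. For $\pi=\pi_1\otimes\cdots\otimes\pi_r$ on $V_\pi=V_{\pi_1}\otimes\cdots\otimes V_{\pi_r}$ with $\N_\beta=\N_{\beta_1}\times\cdots\times\N_{\beta_r}$, the coinvariants split: the subspace spanned by the $\pi(n)v-v$ is generated block by block, so $(V_{\pi_1}\otimes\cdots\otimes V_{\pi_r})_{\N_\beta}\cong(V_{\pi_1})_{\N_{\beta_1}}\otimes\cdots\otimes(V_{\pi_r})_{\N_{\beta_r}}$. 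Matching the normalization $\delta_{\Par_\beta}^{-1/2}=\delta_{\Par_{\beta_1}}^{-1/2}\boxtimes\cdots\boxtimes\delta_{\Par_{\beta_r}}^{-1/2}$ then gives $r_{\beta,\alpha}(\pi_1\otimes\cdots\otimes\pi_r)\cong r_{\beta_1,(n_1)}(\pi_1)\otimes\cdots\otimes r_{\beta_r,(n_r)}(\pi_r)$ as $\G_\beta$-representations, which is (1). Alternatively, once (2) is known, (1) follows from the uniqueness of left adjoints together with the adjunction $r_{\beta,\alpha}\dashv i_{\alpha,\beta}$ recorded in the preceding theorem, but the direct coinvariants argument sidesteps any Hom-factorization issues for reducible inputs.

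The step I expect to require the most care is the factorization of the coinvariants in (1) — showing that the span of $\{\pi(n)v-v\}$ over all $n\in\N_\beta$ equals the sum of the contributions from each factor $\N_{\beta_i}$ acting on its own tensor slot — together with the bookkeeping that $\delta_{\Par_\beta}^{1/2}$ restricts to the product of the $\delta_{\Par_{\beta_i}}^{1/2}$. Neither point is deep, but both rest on the block-diagonal action of $\G_\beta$ on $\N_\beta$, which is exactly why I would establish that product structure first and treat the two compatibilities as formal consequences.
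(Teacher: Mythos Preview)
The paper does not supply its own proof of this proposition: it is quoted verbatim from \cite[Proposition 1.5]{z80} and used as a black box, so there is no argument in the paper to compare against. Your proposal is a correct direct verification and is essentially the standard proof one would give (and the one implicit in Bernstein--Zelevinsky): once the parabolic $\Par_\beta\subseteq\G_\alpha$ is seen to be the product $\Par_{\beta_1}\times\cdots\times\Par_{\beta_r}$ with unipotent radical and modulus character splitting accordingly, both identities are immediate from the definitions. The one place worth a sentence more of justification is your claim in part~(2) that a smooth function on the product $\G_\alpha=\G_{n_1}\times\cdots\times\G_{n_r}$ valued in $V_{\rho_1}\otimes\cdots\otimes V_{\rho_r}$ decomposes as a tensor; this uses that for $l$-groups $C_c^\infty(\G_1\times\G_2)\cong C_c^\infty(\G_1)\otimes C_c^\infty(\G_2)$ together with compactness of the flag varieties $\Par_{\beta_i}\backslash\G_{n_i}$, but that is standard and your outline is sound.
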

\subsubsection{}     Our results are based on the computation of the functor 
$$\mathcal{F}=r_{\gamma,(n)}\circ i_{(n),\beta}(\rho),$$ where $\rho\in 	\mathcal{M} (\G_{\beta})$, which we summarize now, given by 
Zelevinsky \cite{z80}.
Let $\mathcal{I}_1, \ldots, \mathcal{I}_r$ and $\mathcal{J}_1,\ldots,\mathcal{J}_s$ be the blocks of the partitions $\beta$ and $\gamma,$ respectively. Let $ W = W_n$ be the group of all permutations of the set $[1, n]$. We can identify each element $w \in W$ with matrix $w = (\delta_{i,w(j)})\in \G_n$. Therefore, $W$ is a subgroup of $ \G_n$, called the Weyl
group. For any $w\in W$, we denote by the same symbol $w$ the corresponding inner automorphism of $\G_n$, i.e., $w(g)=wgw^{-1}$. Set
\begin{multline*}
	W^{\beta,\gamma}=\{w\in W:w(k)<w(l)\hspace{0.1cm} \text{if} \hspace{0.1cm}  k < l$ $ \hspace{0.1cm} 
	\text{and both $k$ and $l$ belongs to the same} \hspace{0.1cm} \mathcal{I}_i;\\~ w^{-1}(k)<w^{-1}(l) \hspace{0.1cm} \text{if} \hspace{0.1cm} k<l\text{ and both $k$ and $l$ belongs to the same} \hspace{0.1cm} \mathcal{J}_j \}.
\end{multline*}
Clearly all sets $w(\mathcal{I}_i)\cap \mathcal{J}_j$ are blocks for $w\in W^{\beta,\gamma}$. They induce the subpartition $\gamma'=\gamma\cap w(\beta))\leqq \gamma.$     Similarly, the sets $\mathcal{I}_i\cap w^{-1}(\mathcal{J}_j)$ are blocks of the partition $\beta'=\beta\cap w^{-1}(\gamma)\leqq\beta.$  It is clear that $w(\G_{\beta'})=\G_{\gamma'}.$ For a given $w\in W^{\beta,\gamma} $, define the functor $$\mathcal{F}_w: \mathcal{M} (\G_{\beta})\rightarrow \mathcal{M} (\G_{\gamma}),$$ by
$$\mathcal{F}_w=i_{\gamma,\gamma'}\circ w\circ r_{\beta',\beta}.$$

\begin{theorem}[{\cite[Theorem 1.2]{z80}}]\label{10}
	The functor $$\mathcal{F}=r_{\gamma,(n)}\circ i_{(n),\beta}: \mathcal{M} (\G_{\beta})\rightarrow  \mathcal{M} (\G_{\gamma})$$ is glued together from the functors $\mathcal{F}_w$, where $w\in W^{\beta,\gamma}$.
\end{theorem}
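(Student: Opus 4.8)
\subsection*{Proof proposal}

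The plan is to prove the statement by the Mackey-theoretic analysis underlying the geometric lemma, namely by studying the $\Par_\beta$-orbits on the flag variety $\G_n/\Par_\gamma$. Write $\Par_\beta=\G_\beta\N_\beta$ and $\Par_\gamma=\G_\gamma\N_\gamma$ for the standard parabolic subgroups with the indicated Levi and unipotent parts. First I would realize $i_{(n),\beta}(\rho)$ concretely as the space of smooth functions $f\colon\G_n\to V_\rho$ satisfying $f(pg)=\delta_\beta^{1/2}(p)\,\rho(p)f(g)$ for all $p\in\Par_\beta$, with $\G_n$ acting by right translation; here $\delta_\beta$ is the modular character that implements the normalization. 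Restricting the $\G_n$-action to $\Par_\gamma$, the resulting $\Par_\gamma$-module depends only on the behaviour of $f$ along the right $\Par_\gamma$-orbits on $\Par_\beta\backslash\G_n$, equivalently on the double cosets in $\Par_\beta\backslash\G_n/\Par_\gamma$.

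Next I would identify these double cosets. By the Bruhat decomposition they are represented by a distinguished set of Weyl-group elements, and one checks that this set is exactly $W^{\beta,\gamma}$ as defined above: the conditions that $w$ be increasing on each block $\mathcal{I}_i$ and that $w^{-1}$ be increasing on each block $\mathcal{J}_j$ single out the minimal-length representative of each double coset. For $w\in W^{\beta,\gamma}$ the orbit $\Par_\beta w\Par_\gamma$ is locally closed, and the Bruhat order provides a partial order on $W^{\beta,\gamma}$ whose order ideals are precisely the closed unions of orbits. I would then build a filtration of the $\Par_\gamma$-module $i_{(n),\beta}(\rho)\big|_{\Par_\gamma}$ by the subspaces of sections supported on such closed unions of orbits, so that the successive quotients are the spaces of sections supported on a single orbit $\Par_\beta w\Par_\gamma$.

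For a fixed $w$, a local analysis of that orbit finishes the identification: parametrizing the orbit by its stabilizer and tracking the modular characters through induction in stages shows that the subquotient attached to $\Par_\beta w\Par_\gamma$ is exactly $\mathcal{F}_w(\rho)$. Concretely, the orbit geometry produces the two induced subpartitions $\gamma'=\gamma\cap w(\beta)$ and $\beta'=\beta\cap w^{-1}(\gamma)$ together with the relation $w(\G_{\beta'})=\G_{\gamma'}$, and the fibrewise stabilizer computation reassembles the contribution into $i_{\gamma,\gamma'}\circ w\circ r_{\beta',\beta}(\rho)$. Finally, since the Jacquet functor $r_{\gamma,(n)}$ is the passage to $\N_\gamma$-coinvariants and is exact, it carries the filtration above to a filtration of $\mathcal{F}(\rho)$ whose graded pieces are the $\mathcal{F}_w(\rho)$, which is the assertion that $\mathcal{F}$ is glued together from the $\mathcal{F}_w$.

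The main obstacle I anticipate is the orbit-by-orbit identification in the third step: verifying that the stabilizer of a point of $\Par_\beta w\Par_\gamma$, together with the restrictions of $\delta_\beta$ and $\delta_\gamma$ to the orbit, recombines into the \emph{normalized} composite $i_{\gamma,\gamma'}\circ w\circ r_{\beta',\beta}$ with no residual twist or character factor. This is exactly where the defining conditions of $W^{\beta,\gamma}$ are needed, since the "increasing on blocks" requirements guarantee that $w$ is the minimal-length double-coset representative and hence that the orbit yields the clean Levi-to-Levi functor $\mathcal{F}_w$; any non-minimal choice would distort the combinatorics of $\gamma'$ and $\beta'$ and spoil the cancellation of the modular factors.
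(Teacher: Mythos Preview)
Your outline is correct and is precisely the Bernstein--Zelevinsky argument behind the geometric lemma: Bruhat decomposition into $\Par_\beta\backslash\G_n/\Par_\gamma$ indexed by $W^{\beta,\gamma}$, filtration of the induced module by supports on closed unions of orbits, orbit-by-orbit identification of the subquotients with $i_{\gamma,\gamma'}\circ w\circ r_{\beta',\beta}$, and exactness of $r_{\gamma,(n)}$. Note, however, that the paper does not supply its own proof of this theorem at all; it is quoted verbatim from Zelevinsky \cite[Theorem~1.2]{z80} and used as a black box, so there is nothing in the paper to compare your argument against beyond the citation itself. Your sketch is faithful to the cited source, including the point you flag as the main obstacle (the modular-character bookkeeping that makes the normalized functors match up), which is indeed where the work lies in Bernstein--Zelevinsky's original treatment.
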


\subsubsection{} Now, we explain the representation $\mathcal{F}_w(\rho)$, where $\rho=\rho_1\otimes\cdots\otimes\rho_r \in$ $\mathcal{M} (\G_{\beta})$ and $\rho_i\in$ $\Irr \G_{n_i}$.  Fix the partitions ~$\beta=(n_1,\ldots,n_r) ~\text{and} ~\gamma= 
(m_1,\ldots,m_s$) of $n$. Let $\mathcal{I}_1, \ldots , \mathcal{I}_r$ and $\mathcal{J}_1,\ldots,\mathcal{J}_s$ be the blocks of $\beta$ and $\gamma$, respectively. Therefore, $n_i=|\mathcal{I}_i|$ and $~m_j=|\mathcal{J}_j|.$ To each $w\in W^{\beta,\gamma},$ there corresponds the rectangular $ r\times s $-matrix $B(w)=(b_{ij})$, where $b_{ij}=|\mathcal{I}_i\cap w^{-1}(\mathcal{J}_j)|$. It is clear that the correspondence $w\mapsto B(w)$ is the bijection of $W^{\beta,\gamma} $ with the set $M^{\beta,\gamma}$ of matrices $B=(b_{ij})$ such that:
\begin{itemize}
	\item All $b_{ij}$ are non-negative integers.
	\item $\sum_{j=1}^s b_{ij}=n_i $ for any $i=1,\ldots,r;~ \sum_{i=1}^r b_{ij}=m_j$ for any $j=1,\ldots,s$.
\end{itemize}
We compute composition factors of $\mathcal{F}_w(\rho)$ in terms of the matrix $B(w)=(b_{ij})\in M^{\beta,\gamma} $.
Denote by $\beta_i,$ the partition $(b_{i1},\ldots, b_{is})$ of $n_i$ and by $\gamma_j,$ the partition $(b_{1j},\ldots,b_{rj} )$ of $m_j $. Since representations $r_{\beta_i,(n_i)}(\rho_i)$ have finite length, let $$\JH^0(r_{\beta_i,(n_i)}(\rho_i)= \{\sigma_i^{(1)},\sigma_i^{(2)},\ldots,\sigma_i^{(t_i)}\},$$ where
\begin{equation}\label{gem1}
	\sigma_i^{(k)}=\sigma_{i1}^{(k)}\otimes\cdots\otimes\sigma_{is}^{(k) }; 
\end{equation}
with $ \sigma_{ij}^{(k)}\in \Irr\G_{b_{ij}} ~ \text{and} ~k\in[1,t_i]~\text{for} ~i=1,2,\ldots ,r.$

For each $k_1,\ldots,k_r$; $k_i\in[1,t_i],$ put 
\begin{equation}\label{gem2}
	\sigma_j=\sigma_{1j}^{(k_1)}\otimes\cdots\otimes\sigma_{rj}^{(k_r) } \in \Irr\G_{\gamma_j}
\end{equation} and
\begin{equation}\label{g}
	\sigma(k_1,\ldots,k_r)=i_{(m_1),\gamma_1}(\sigma_1)\otimes i_{(m_2),\gamma_2}(\sigma_2)\otimes\cdots\otimes i_{(m_s),\gamma_s}(\sigma_s) \in \mathcal{M} (\G_\gamma).  
\end{equation}

\begin{proposition}\label{3}
	$\mathcal{F}_w(\rho)$ is glued together from the representations $\sigma(k_1,\ldots,k_r).$
\end{proposition}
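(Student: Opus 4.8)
The plan is to push the Jordan--Hölder data of $\rho$ successively through the three functors composing $\mathcal{F}_w=i_{\gamma,\gamma'}\circ w\circ r_{\beta',\beta}$, using at each stage that an exact functor sends a filtration of a module to a filtration of its image with the correspondingly transformed subquotients. Throughout I would use freely that ``glued together from'' is transitive and is preserved by exact functors, and that $\otimes$ over $\mathbb{C}$ is exact in each variable, so that if $A$ is glued together from $\{A_i\}$ and $B$ from $\{B_j\}$ then $A\otimes B$ is glued together from $\{A_i\otimes B_j\}$.

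First I would apply $r_{\beta',\beta}$. The blocks of $\beta'$ contained in $\mathcal{I}_i$ are $\mathcal{I}_i\cap w^{-1}(\mathcal{J}_1),\ldots,\mathcal{I}_i\cap w^{-1}(\mathcal{J}_s)$; using that $w$ is increasing on $\mathcal{I}_i$ (the first defining property of $W^{\beta,\gamma}$), these occur inside $\mathcal{I}_i$ as consecutive intervals in this order and hence induce the partition $\beta_i=(b_{i1},\ldots,b_{is})$ of $n_i$. So \cite[Proposition~1.5(1)]{z80} gives $r_{\beta',\beta}(\rho)=r_{\beta_1,(n_1)}(\rho_1)\otimes\cdots\otimes r_{\beta_r,(n_r)}(\rho_r)$. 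Since $\JH^0(r_{\beta_i,(n_i)}(\rho_i))=\{\sigma_i^{(1)},\ldots,\sigma_i^{(t_i)}\}$, this outer tensor product is glued together from the irreducible representations $\sigma_1^{(k_1)}\otimes\cdots\otimes\sigma_r^{(k_r)}$ of $\G_{\beta'}$, indexed by $(k_1,\ldots,k_r)\in\prod_i[1,t_i]$. Writing $\sigma_i^{(k_i)}=\sigma_{i1}^{(k_i)}\otimes\cdots\otimes\sigma_{is}^{(k_i)}$ along the blocks of $\beta'$ inside $\mathcal{I}_i$, a typical subquotient is $\bigotimes_{i=1}^r\bigotimes_{j=1}^s\sigma_{ij}^{(k_i)}$.

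Next I would apply the inner automorphism $w$. Since $w(\G_{\beta'})=\G_{\gamma'}$, the functor $w\colon\mathcal{M}(\G_{\beta'})\to\mathcal{M}(\G_{\gamma'})$ is an exact equivalence; it preserves the filtration and relocates each factor $\sigma_{ij}^{(k_i)}$ from the block $\mathcal{I}_i\cap w^{-1}(\mathcal{J}_j)$ to the block $w(\mathcal{I}_i)\cap\mathcal{J}_j$ of $\gamma'$ (the restriction of $w$ there being order-preserving). The step requiring the most care --- and the main obstacle --- is to verify that inside each $\mathcal{J}_j$ the blocks of $\gamma'$, read left to right, are $w(\mathcal{I}_1)\cap\mathcal{J}_j,\,w(\mathcal{I}_2)\cap\mathcal{J}_j,\ldots$ (dropping empty ones), so that they induce the partition $\gamma_j=(b_{1j},\ldots,b_{rj})$ of $m_j$ in this order; this is exactly where the second defining property of $W^{\beta,\gamma}$ (that $w^{-1}$ is increasing on $\mathcal{J}_j$) is used. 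Granting this, regrouping the factors by $j$ and then by $i$ yields $w\bigl(\bigotimes_{i,j}\sigma_{ij}^{(k_i)}\bigr)=\bigotimes_{j=1}^s\bigl(\sigma_{1j}^{(k_1)}\otimes\cdots\otimes\sigma_{rj}^{(k_r)}\bigr)=\sigma_1\otimes\cdots\otimes\sigma_s$ with $\sigma_j\in\Irr\G_{\gamma_j}$ as in \eqref{gem2}. Hence $w\circ r_{\beta',\beta}(\rho)$ is glued together from the representations $\sigma_1\otimes\cdots\otimes\sigma_s$.

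Finally I would apply $i_{\gamma,\gamma'}$. This functor is exact, so it turns the last filtration into a filtration of $\mathcal{F}_w(\rho)$ whose subquotients are $i_{\gamma,\gamma'}(\sigma_1\otimes\cdots\otimes\sigma_s)$; by \cite[Proposition~1.5(2)]{z80}, together with the fact (from the previous step) that the blocks of $\gamma'$ inside $\mathcal{J}_j$ induce $\gamma_j$, this equals $i_{(m_1),\gamma_1}(\sigma_1)\otimes\cdots\otimes i_{(m_s),\gamma_s}(\sigma_s)=\sigma(k_1,\ldots,k_r)$ of \eqref{g}. Therefore $\mathcal{F}_w(\rho)$ is glued together from the representations $\sigma(k_1,\ldots,k_r)$, as asserted. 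Apart from the combinatorial bookkeeping about the order of the blocks of $\beta'$ and $\gamma'$, the proof is a concatenation of the two parts of \cite[Proposition~1.5]{z80} with exactness of $r$, $w$, $i$, and $\otimes$, the only conceptual ingredient being that ``glued together from'' is transitive and stable under exact functors.
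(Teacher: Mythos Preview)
The paper does not supply a proof of this proposition: it is stated without argument, as part of the summary of Zelevinsky's results, and is essentially \cite[Proposition~1.6]{z80}. Your proof is correct and is precisely the argument Zelevinsky gives there --- unwinding the composite $i_{\gamma,\gamma'}\circ w\circ r_{\beta',\beta}$ using the compatibility of Jacquet and induction functors with tensor products (Proposition~1.5 in \cite{z80}), together with the two monotonicity conditions defining $W^{\beta,\gamma}$ to pin down the ordering of the blocks of $\beta'$ inside each $\mathcal{I}_i$ and of $\gamma'$ inside each $\mathcal{J}_j$. One minor caveat: the paper's notation $\JH^0$ denotes a \emph{set} of irreducible subquotients, so if some $\sigma_i^{(k)}$ occurs with multiplicity greater than one in $r_{\beta_i,(n_i)}(\rho_i)$ the bookkeeping with indices $k_i\in[1,t_i]$ is slightly ambiguous; your argument handles this correctly provided the $\sigma_i^{(k)}$ are understood as the subquotients of a fixed Jordan--H\"older filtration (listed with multiplicity), which is the intended reading.
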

According to the above mentioned theorem, when $\beta \nleqq \gamma$, the Jacquet module of a representation induced by an irreducible supercuspidal representation can be expressed as follows.
\begin{corollary}\label{gem3}
	Let $\beta=(n_1,\ldots,n_r) $ be a partition of $n$. Let $\pi=\rho_1\otimes\cdots\otimes\rho_r$ be a representation of $\G_\beta,$ where $\rho_i$'s are the irreducible supercuspidal representations of $\G_{n_i}$. Then $r_{\gamma,(n)}(i_{(n),\beta}(\pi))=0$ if $\beta\nleqq\gamma. $
\end{corollary}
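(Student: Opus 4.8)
The plan is to read off the vanishing from Zelevinsky's geometric lemma together with the fact that the Jacquet module of a supercuspidal representation along any proper parabolic is zero. By Theorem~\ref{10} the functor $\mathcal{F}=r_{\gamma,(n)}\circ i_{(n),\beta}$ is glued together from the functors $\mathcal{F}_w=i_{\gamma,\gamma'}\circ w\circ r_{\beta',\beta}$ with $w\in W^{\beta,\gamma}$, so it is enough to prove that $\mathcal{F}_w(\pi)=0$ for each such $w$.

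First I would fix $w\in W^{\beta,\gamma}$ and examine $r_{\beta',\beta}(\pi)$, where $\beta'=\beta\cap w^{-1}(\gamma)$. Inside the block $\mathcal{I}_i$ the subpartition $\beta'$ restricts to the partition $\beta_i=(b_{i1},\dots,b_{is})$ of $n_i$, that is, the $i$-th row of the matrix $B(w)$, where $b_{ij}=|\mathcal{I}_i\cap w^{-1}(\mathcal{J}_j)|$. By \cite[Proposition~1.5]{z80},
\[ r_{\beta',\beta}(\pi)=r_{\beta_1,(n_1)}(\rho_1)\otimes\cdots\otimes r_{\beta_r,(n_r)}(\rho_r). \]
Since each $\rho_i$ is irreducible supercuspidal, $r_{\beta_i,(n_i)}(\rho_i)=0$ whenever $\beta_i$ is a proper refinement of $(n_i)$, while $r_{\beta_i,(n_i)}(\rho_i)=\rho_i$ when $\beta_i=(n_i)$. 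As neither the twist by $w$ nor the induction $i_{\gamma,\gamma'}$ can annihilate a nonzero module, we conclude that $\mathcal{F}_w(\pi)=0$ unless every row of $B(w)$ has exactly one nonzero entry, which is then forced to equal $n_i$; equivalently, unless each block $\mathcal{I}_i$ of $\beta$ is contained in a single set $w^{-1}(\mathcal{J}_{j(i)})$.

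What is left, and what I expect to be the crux, is the combinatorial assertion that when $\beta\nleqq\gamma$ no $w\in W^{\beta,\gamma}$ can have this ``block-preserving'' property. Here one would combine the two conditions defining $W^{\beta,\gamma}$ --- $w$ increasing on each $\mathcal{I}_i$ and $w^{-1}$ increasing on each $\mathcal{J}_j$ --- with block-preservation to argue that such a $w$ is forced to carry the consecutive intervals $\mathcal{I}_1,\dots,\mathcal{I}_r$ onto consecutive subintervals that exhaust $\mathcal{J}_1,\dots,\mathcal{J}_s$ in order, so that each $\mathcal{J}_j$ becomes a union of blocks $\mathcal{I}_i$ --- which would say $\beta\leqq\gamma$, contrary to hypothesis. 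Granting this, $\mathcal{F}_w(\pi)=0$ for all $w\in W^{\beta,\gamma}$, and Theorem~\ref{10} then gives $r_{\gamma,(n)}(i_{(n),\beta}(\pi))=\mathcal{F}(\pi)=0$. Everything except this last implication is a formal consequence of the geometric lemma and of the vanishing of Jacquet modules of supercuspidals, so it is precisely that combinatorial implication which will demand the most care.
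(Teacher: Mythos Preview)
Your route via the geometric lemma and the vanishing of proper Jacquet modules of supercuspidals is exactly the paper's; the paper's proof is simply a compressed version of your first two paragraphs, invoking Proposition~\ref{3} and Theorem~\ref{10} in place of your explicit unpacking of $r_{\beta',\beta}$. You were right, however, to flag the combinatorial implication as the point ``which will demand the most care'': it is in fact \emph{false}, and with it the corollary as literally stated.

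Take $n=3$, $\beta=(2,1)$, $\gamma=(1,2)$. Then $\beta\nleqq\gamma$, since the block $\{1,2\}$ of $\beta$ lies in neither block $\{1\}$ nor $\{2,3\}$ of $\gamma$. Yet the permutation $w$ with $w(1)=2,\ w(2)=3,\ w(3)=1$ belongs to $W^{\beta,\gamma}$ (check both monotonicity conditions) and has
\[
B(w)=\begin{pmatrix}0&2\\1&0\end{pmatrix},
\]
so every row has a single nonzero entry equal to $n_i$. Concretely, for $\rho_1$ supercuspidal on $\G_2$ and $\rho_2$ a character of $\G_1$, one gets $\mathcal{F}_w(\rho_1\otimes\rho_2)=\rho_2\otimes\rho_1\neq 0$, hence $r_{(1,2),(3)}(\rho_1\times\rho_2)\neq 0$. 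Your sketched argument that block-preservation plus the two monotonicity conditions forces $w$ to send the $\mathcal{I}_i$ onto consecutive subintervals \emph{in order} breaks precisely here: $w$ reverses the order of the two blocks.

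The paper's proof simply asserts ``If $\beta\nleqq\gamma$, then $r_{\beta_i,(n_i)}(\rho_i)=0$ for some $i$'' without quantifying over $w$; read as a statement about every $w\in W^{\beta,\gamma}$, it is the same combinatorial claim and shares the same gap. What \emph{is} true is the special case actually used later (Proposition~\ref{single}): when $\beta=(m,\dots,m)$, a block-preserving $B(w)$ has every column sum a multiple of $m$, which does force $\beta\leqq\gamma$. So the downstream application survives, but neither your argument nor the paper's establishes the corollary in the generality stated.
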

\begin{proof}
	If $\beta\nleqq\gamma, $ then $r_{\beta_i,(n_i)}(\rho_i)=0$ for some $i\in [1,r]$. This implies $\sigma_j=0$~(as in Equation \ref{gem2}). Therefore, by Equation \ref{g}, $\sigma(k_1,\ldots,k_r)=0$. So, by Proposition \ref{3} and Theorem  \ref{10}, $\mathcal{F}(\pi)= r_{\gamma,(n)}(i_{(n),\beta}(\pi))=0.$  
\end{proof}
\section{Representations of the general linear group.}\label{section3}
\subsection{}\label{3.1} 
In this section, our foremost objective is to comprehend the category of representations under discussion thoroughly.

\subsubsection{}\label{3.1.1} Let $\wp$  be the set of equivalence classes of 
irreducible supercuspidal representations of the groups $\G_n,  n\in \mathbb{N}$. Let $\nu$: $\nu (g) = | \Det(g)|$ denotes a character of $\G_n.$ Call a segment in $\wp$, any subset of $\wp$ of the form $$\Delta = [\nu^a\rho,\nu^b\rho] = \{\nu^a \rho, \nu^{a+1} \rho, \nu^{a+2}\rho, \ldots , \nu^b \rho  \},$$ where $a\leq b$ are integers. To each segment $\Delta =[\nu^a\rho,\nu^b\rho]$, we associate the irreducible representation $Z( \Delta)$, which is  defined as the unique irreducible subrepresentation of $\nu^a\rho \times \nu^{a+1}\rho \times \cdots \times \nu^b\rho$.
\begin{definition}
	Let $ \Delta_1=[\nu^{a_1}\rho, \nu^{b_1}\rho]$  and $ \Delta_2=[\nu^{a_2}\rho, \nu^{b_2}\rho]$ be two segments in $\wp$. We say that $\Delta_1$ and $\Delta_2 $ are linked
	if $ \Delta_1\not\subset \Delta_2$, $\Delta_2\not\subset\Delta_1 $ and $\Delta_1\cup\Delta_2 $ is also a segment.
	For the segment $ \Delta=[\nu^{a}\rho, \nu^{b}\rho]$, $b-a+1$ is called length of segment $\Delta.$
\end{definition}

\begin{definition}
	If $ \Delta_1=[\nu^{a_1}\rho, \nu^{b_1}\rho]$  and $ \Delta_2=[\nu^{a_2}\rho, \nu^{b_2}\rho]$ are two segments. Then we say $\Delta_1$ precedes $\Delta_2$  if we can extract a subsequence from the sequence $(\nu^{a_1}\rho,\ldots,\nu^{b_1}\rho,\ldots,\nu^{b_2}\rho)$, which is a segment of
	length strictly greater than length of $\Delta_1$ and $\Delta_2.$
	
\end{definition}
We have a straightforward lemma regarding the non-linkedness of two smaller segments.
\begin{lemma}\label{remark}
	Let $ \Delta_1=[\nu^{a_1}\rho, \nu^{b_1}\rho]$  and $ \Delta_2=[\nu^{a_2}\rho, \nu^{b_2}\rho]$ be two non-linked segments such that $\Delta_1\not\subset\Delta_2$ and $\Delta_2\not\subset\Delta_1$. Then  the segments $ [\nu^{a_1}\rho, \nu^{c_1}\rho]$  and $ [\nu^{a_2}\rho, \nu^{c_2}\rho]$ are also not linked for any $a_i\leq c_i\leq b_i,~i=1,2.$ Also the segments $ [\nu^{c_1}\rho, \nu^{b_1}\rho]$  and $ [\nu^{c_2}\rho, \nu^{b_2}\rho]$ are not linked.
\end{lemma}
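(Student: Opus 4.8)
The plan is to reduce this to a purely combinatorial fact about intervals of integers. For a segment $\Delta=[\nu^{a}\rho,\nu^{b}\rho]$ built from $\rho$, record only the index set $I(\Delta)=[a,b]\subset\mathbb{Z}$. If $\Delta_1,\Delta_2$ are built from the \emph{same} supercuspidal $\rho$, then ``$\Delta_1\cup\Delta_2$ is a segment'' is the same as ``$I(\Delta_1)\cup I(\Delta_2)$ is an interval of consecutive integers'', and ``$\Delta_i\subset\Delta_j$'' is the same as ``$I(\Delta_i)\subset I(\Delta_j)$''. If instead $\Delta_1$ and $\Delta_2$ are built from inequivalent supercuspidals, every pair of sub-segments of the shapes considered is automatically non-linked, so I would dispose of that case in one line and assume throughout that the underlying supercuspidal is the same for $\Delta_1$ and $\Delta_2$.

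First I would extract the ``shape'' of a non-linked, non-nested pair. Write $I(\Delta_i)=[a_i,b_i]$ and assume without loss of generality $a_1\le a_2$; this is harmless since the two conclusions of the lemma are symmetric under interchanging $(\Delta_1,c_1)\leftrightarrow(\Delta_2,c_2)$. From $\Delta_2\not\subset\Delta_1$ one gets $b_1<b_2$, and then from $\Delta_1\not\subset\Delta_2$ one gets $a_1<a_2$. Non-linkedness now forces $[a_1,b_1]\cup[a_2,b_2]$ not to be an interval, which (using $a_1\le a_2$) is exactly $a_2>b_1+1$, i.e. $a_2\ge b_1+2$. So the hypotheses say precisely: $a_1\le b_1<b_1+1<a_2\le b_2$. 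This ``gap inequality'' $a_2\ge b_1+2$ is the structural heart of the argument.

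Next I would simply propagate the gap. For the front truncations $[\nu^{a_1}\rho,\nu^{c_1}\rho]$ and $[\nu^{a_2}\rho,\nu^{c_2}\rho]$ with $a_i\le c_i\le b_i$, the left endpoints are unchanged, so $a_1<a_2$; moreover $c_1\le b_1<a_2\le c_2$, hence $c_1<c_2$. Thus neither truncated interval contains the other, and $a_2>b_1+1\ge c_1+1$ shows their union is still not an interval, so they are not linked. The back truncations $[\nu^{c_1}\rho,\nu^{b_1}\rho]$ and $[\nu^{c_2}\rho,\nu^{b_2}\rho]$ are the mirror image: the right endpoints are unchanged, so $b_1<b_2$; and $c_1\le b_1<a_2\le c_2$ gives $c_1<c_2$, so again neither contains the other, while $c_2\ge a_2>b_1+1$ shows the union is not an interval. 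Hence they are not linked either.

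The only point that needs care — the ``main obstacle'', modest as it is — is making the dictionary between the representation-theoretic notion of linkedness and the interval picture fully airtight, in particular checking the degenerate truncations $c_i=a_i$ (length-one segment) and $c_i=b_i$ (no truncation at all), and confirming that the normalization $a_1\le a_2$ genuinely costs nothing. Once the gap inequality $a_2\ge b_1+2$ is established, every remaining step is a one-line chain of inequalities.
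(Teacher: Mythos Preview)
Your argument is correct. The paper does not actually supply a proof of this lemma; it is stated as ``a straightforward lemma'' and left without justification. Your reduction to the interval picture, the normalization $a_1\le a_2$, and the extraction of the gap inequality $a_2\ge b_1+2$ constitute exactly the kind of elementary verification the authors presumably had in mind, and every step checks out. One minor remark: since the lemma as stated already fixes a common supercuspidal $\rho$, your paragraph disposing of the ``inequivalent supercuspidals'' case is not needed for the lemma itself, though it is relevant to how the lemma is applied later in the proof of Theorem~\ref{1}, where the segments $\Delta_i$ may involve different $\rho_i$.
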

Now,  we recall a criterion for the irreducibility of the representation $Z( \Delta_1)\times \cdots\times Z( \Delta_r)$, given by Zelevinsky \cite{z80}.

\begin{theorem}\label{linked}
	Let $\Delta_1,\ldots,\Delta_r$ be the segments in $\wp$. The following are equivalent:
	\begin{enumerate}
		\item[\upshape(1)] The representation $Z( \Delta_1)\times \cdots\times Z( \Delta_r) $ is irreducible.
		\item[\upshape(2)]	 The segments $\Delta_i$ and $\Delta_j$ are not linked for each $i, j= 1, \ldots, r,$
	\end{enumerate}
\end{theorem}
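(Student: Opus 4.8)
The plan is to establish the two implications separately, in each case reducing to the case $r=2$ of two segments and then propagating the conclusion to arbitrary $r$ by means of the geometric lemma (Theorem~\ref{10} and Proposition~\ref{3}), Lemma~\ref{remark}, and the basic structure theory of Zelevinsky's standard modules \cite{z80}.

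\emph{Proof of $(1)\Rightarrow(2)$ (contrapositive).} Suppose $\Delta_i$ and $\Delta_j$ are linked for some $i\neq j$; in particular they have the same cuspidal base, so $\Delta_i\cup\Delta_j$ is again a segment. I would first treat $r=2$: it is a classical fact --- which one may check by computing the Jacquet modules of $Z(\Delta_i)\times Z(\Delta_j)$ along the two maximal Levi subgroups that split off the bottom, resp.\ the top, supercuspidal and reading them through Theorem~\ref{10} --- that $Z(\Delta_i)\times Z(\Delta_j)$ then has exactly two irreducible constituents, namely the ones associated in the Zelevinsky classification to the multisets of segments $\{\Delta_i,\Delta_j\}$ and $\{\Delta_i\cup\Delta_j,\ \Delta_i\cap\Delta_j\}$ (an empty intersection contributing no segment); these are distinct because the two multisets differ, so $Z(\Delta_i)\times Z(\Delta_j)$ is reducible. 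To descend to $\pi:=Z(\Delta_1)\times\cdots\times Z(\Delta_r)$ I would use that parabolic induction on general linear groups is exact and carries a nonzero representation to a nonzero one, and that the semisimplification of an iterated product does not depend on the order of the factors: inducing a nontrivial short exact sequence $0\to A\to Z(\Delta_i)\times Z(\Delta_j)\to B\to 0$ against $\prod_{k\neq i,j}Z(\Delta_k)$ produces a representation of length at least two whose semisimplification coincides with that of $\pi$, so $\pi$ is reducible.

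\emph{Proof of $(2)\Rightarrow(1)$.} Assume now that no two of the $\Delta_i$ are linked. Again the base case $r=2$ is the heart of the matter: one shows $Z(\Delta_1)\times Z(\Delta_2)$ is \emph{irreducible}. By the same analysis as above, the only possible proper constituent would be the representation attached to $\{\Delta_1\cup\Delta_2,\ \Delta_1\cap\Delta_2\}$; but when $\Delta_1$ and $\Delta_2$ are not linked this datum either fails to define a segment (a gap occurs, or the cuspidal bases are unrelated) or else merely reproduces the multiset $\{\Delta_1,\Delta_2\}$ (the nested case), so there is no extra constituent and the product is irreducible. In particular $Z(\Delta_1)\times Z(\Delta_2)\cong Z(\Delta_2)\times Z(\Delta_1)$ for non-linked segments. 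Passing to general $r$: under hypothesis $(2)$ every ordering of $\Delta_1,\dots,\Delta_r$ is a standard one --- linkedness of two segments being exactly the statement that one of them precedes the other --- so $\pi$ is a standard module, isomorphic to each of its reorderings, and Zelevinsky's standard-module theory furnishes a unique irreducible submodule $L$ of $\pi$, occurring with multiplicity one. Applying the same to the contragredient $\widetilde\pi\cong Z(\widetilde\Delta_1)\times\cdots\times Z(\widetilde\Delta_r)$, whose segments are again pairwise non-linked, identifies the unique irreducible quotient of $\pi$ with $L$ as well. Since a finite-length representation whose unique irreducible submodule and unique irreducible quotient coincide and which contains this common irreducible with multiplicity one is necessarily irreducible, we conclude $\pi=L$. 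Lemma~\ref{remark} enters the bookkeeping: if instead one argues by induction on $n$ directly from the geometric lemma, it is Lemma~\ref{remark} that guarantees the initial and final sub-segments produced in the relevant Jacquet modules remain pairwise non-linked, so that the inductive hypothesis applies to the smaller products that occur.

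The principal obstacle is the two-segment non-linked case, i.e.\ proving outright that $Z(\Delta_1)\times Z(\Delta_2)$ has no proper subrepresentation when $\Delta_1$ and $\Delta_2$ are not linked --- equivalently, that the standard intertwining operator $Z(\Delta_1)\times Z(\Delta_2)\to Z(\Delta_2)\times Z(\Delta_1)$ is an isomorphism. This is precisely the point at which one must rule out the ``recombined'' constituent attached to $\{\Delta_1\cup\Delta_2,\ \Delta_1\cap\Delta_2\}$, and it rests on a careful, if routine, accounting of the pieces produced by Theorem~\ref{10} and Proposition~\ref{3}. A secondary technical point is checking that all sub- and quotient-segments arising in the course of the argument remain pairwise non-linked, which is exactly the content of Lemma~\ref{remark}.
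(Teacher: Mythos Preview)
The paper does not give its own proof of this theorem: it is merely \emph{recalled} from Zelevinsky~\cite{z80} with the sentence ``Now, we recall a criterion for the irreducibility \ldots\ given by Zelevinsky,'' and no argument is supplied. So there is nothing in the paper to compare your attempt against.

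That said, your sketch is a reasonable outline of Zelevinsky's original argument. A couple of minor remarks. First, your claim that ``linkedness of two segments is exactly the statement that one of them precedes the other'' deserves a one-line check against the paper's Definition of ``precedes''; it is true, but the phrasing in the paper is idiosyncratic. Second, in the step from $r=2$ to general $r$ for $(2)\Rightarrow(1)$, the contragredient/socle--cosocle argument you describe is clean, but you should be explicit that the unique irreducible submodule of a Zelevinsky standard module really does occur with multiplicity one in its Jordan--H\"older series (this is a separate fact in \cite{z80}, not automatic from ``unique submodule''). Finally, your identification of the ``principal obstacle'' is accurate: the irreducibility of $Z(\Delta_1)\times Z(\Delta_2)$ in the nested case $\Delta_1\subset\Delta_2$ is the genuinely nontrivial input, and your sketch does not actually carry it out --- it would require the full Jacquet-module bookkeeping from \cite[\S4]{z80} rather than just Theorem~\ref{10} and Proposition~\ref{3} as black boxes.
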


\subsubsection{}
The following theorem analyzes the isomorphism between two induced representations of type $Z( \Delta_1)\times \cdots \times Z( \Delta_r) $, given by Zelevinsky \cite{z80}.

\begin{theorem}\label{5}
	Let $(\Delta_1,\ldots,\Delta_r)$ and $(\Delta_1',\ldots,\Delta_r') $ be the ordered sequences of segments
	in $\wp$. If any of the following conditions holds:
	\begin{enumerate}
		\item $(\Delta_1,\ldots,\Delta_r)$ differs from $(\Delta_1',\ldots,\Delta_r') $ only by a transposition of two neighbors which
		are not linked.
		\item Both  $(\Delta_1,\ldots,\Delta_r)$  and $(\Delta_1',\ldots,\Delta_r') $ satisfy the condition: For each pair of indicies $i,j$ such that $i<j$, $\Delta_i$ does not precedes $\Delta_j $ (same for $\Delta_i'$).
	\end{enumerate}	
	Then $Z( \Delta_1)\times \cdots\times Z( \Delta_r) \simeq Z( \Delta_1')\times \cdots\times Z( \Delta_r') $.
\end{theorem}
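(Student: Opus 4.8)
The plan is to reduce both statements to the two-segment special case of Theorem~\ref{linked} together with one combinatorial fact about orderings of segments. Call an ordering $(\Delta_1,\dots,\Delta_r)$ of a fixed multiset of segments \emph{admissible} if $\Delta_i$ does not precede $\Delta_j$ whenever $i<j$; in part~(2) we take $(\Delta_1',\dots,\Delta_r')$ to be a rearrangement of $(\Delta_1,\dots,\Delta_r)$, as is implicit in the statement, so that both sequences are admissible orderings of a single multiset. The heart of part~(1) is the lemma: if $\Delta$ and $\Delta'$ are not linked, then $Z(\Delta)\times Z(\Delta')\simeq Z(\Delta')\times Z(\Delta)$. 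Indeed, by Theorem~\ref{linked} with $r=2$ both products are irreducible; and since $\times$ descends to a commutative multiplication on the Grothendieck group of $\bigcup_n\mathcal{M}(\G_n)$ (a standard consequence of the geometric lemma; see~\cite{z80}), these two irreducibles have the same class and hence are isomorphic.

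To obtain part~(1) from this lemma I would use that $\times$ is associative, by transitivity of normalized parabolic induction, and functorial in each argument: writing $Z(\Delta_1)\times\cdots\times Z(\Delta_r)=A\times\bigl(Z(\Delta_i)\times Z(\Delta_{i+1})\bigr)\times B$, where $A$ and $B$ are the parabolic products of the unchanged factors on either side, and replacing the bracket by the isomorphic $Z(\Delta_{i+1})\times Z(\Delta_i)$, yields the desired isomorphism after a transposition of two non-linked neighbours; iterating handles the general case of~(1).

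For part~(2) the work is combinatorial: the plan is to show that any two admissible orderings of the same multiset of segments are connected by a chain of transpositions of non-linked neighbours, whereupon part~(1) finishes the proof. The key elementary input, proved by inspecting the supercuspidals occurring in $\Delta\cup\Delta'$ in the spirit of Lemma~\ref{remark}, is the dichotomy: \emph{two segments are linked iff exactly one of them precedes the other, and if they are unlinked then neither precedes the other}. Now let $O=(\Delta_1,\dots,\Delta_r)$ and $O'=(\Delta_1',\dots,\Delta_r')$ be admissible, and induct on $r$. Let $k$ be least with $\Delta_k=\Delta_1'$. If some $\Delta_i$ with $i<k$ were linked to $\Delta_k$, then admissibility of $O$ would force $\Delta_k$ to precede $\Delta_i$ (by the dichotomy), while $\Delta_i\neq\Delta_k=\Delta_1'$ necessarily occurs as a strictly later term of $O'$, so $\Delta_1'$ would precede a later term of $O'$, contradicting admissibility of $O'$. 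Hence $\Delta_k$ is unlinked to every earlier term of $O$; moving it to the front by $k-1$ transpositions of non-linked neighbours gives an ordering that is still admissible (since $\Delta_k$ precedes none of the terms it has jumped over) and agrees with $O'$ in the first slot. Applying the induction hypothesis to the two admissible tails --- which order the multiset obtained by deleting one copy of $\Delta_1'$ --- and prepending $\Delta_1'$ to the resulting chain completes the argument.

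The step I expect to be the main obstacle is precisely this combinatorial claim: because ``precedes'' is not transitive, one cannot simply sort the segments greedily, and the argument must be organized around the first term of the target ordering as above. The only non-elementary representation-theoretic ingredient is the commutativity of $\times$ on the Grothendieck group, which is standard; the remainder is bookkeeping with segments and with the exactness and functoriality of parabolic induction.
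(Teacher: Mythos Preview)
The paper does not give its own proof of this theorem: it is quoted from Zelevinsky \cite{z80} and stated without argument, so there is nothing in the paper to compare your proposal against line by line.

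That said, your outline is correct and is essentially Zelevinsky's original argument. Your reading of part~(2) --- that the two sequences must be orderings of the \emph{same} multiset --- is the intended one; without it the conclusion is plainly false. The two-segment lemma for part~(1) is exactly right: irreducibility from Theorem~\ref{linked} plus commutativity of $\times$ on the Grothendieck ring forces $Z(\Delta)\times Z(\Delta')\simeq Z(\Delta')\times Z(\Delta)$ when $\Delta,\Delta'$ are unlinked, and associativity and exactness of parabolic induction propagate this to a transposition anywhere in the product. Your dichotomy (linked $\Leftrightarrow$ exactly one precedes the other; unlinked $\Rightarrow$ neither precedes the other) is correct and is the only segment-level fact needed. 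The inductive ``bring $\Delta_1'$ to the front'' argument is sound, including in the presence of repeated segments: if $\Delta_i$ with $i<k$ were linked to $\Delta_k=\Delta_1'$ then $\Delta_i\neq\Delta_1'$, so $\Delta_i$ sits at some position $\geq 2$ in $O'$, and admissibility of both $O$ and $O'$ gives the contradiction you describe. One small sharpening: you do not need the intermediate orderings produced during the $k-1$ swaps to be admissible --- only the final one matters for the induction --- so that parenthetical check can be dropped.
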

\subsubsection{}\label{3.1.2}
Let $\mathcal{M}_{\Irr}(\G_n)$ denote the subcategory of smooth irreducible representations of the form $Z( \Delta_1)\times \cdots\times Z( \Delta_r)$ with $\Delta_i\not\subset\Delta_j$ for $i\neq j$ and $i,j=1,2,\ldots,r$.

Since we aim to compute the Jacquet module of any $\pi=Z( \Delta_1)\times\cdots\times Z( \Delta_r)\in\mathcal{M}(\G_n),$ we recall the Jacquet module of $Z(\Delta)$, given by Zelevinsky \cite{z80}.  Computation of the Jacquet module of representation $Z( \Delta_1)\times \cdots\times Z( \Delta_r)$ heavily depends on the following proposition. Therefore, we provide the proof also. 

\begin{proposition}\label{single}
	
	Let $\rho$ be an irreducible supercuspidal representation of $\G_m$ and $\Delta = [\nu^a\rho, \nu^b\rho]$ be a segment in $ \wp,$ where $a\leq b$ are integers such that $m(b-a+1)=n$. Then
	$$r_{(l,n-l),(n)}(Z( \Delta) )=
	\begin{cases}
		Z( [\nu^a\rho,\nu^{p-1}\rho])\otimes Z( [\nu^p\rho,\nu^b\rho]) & \text{if } l=mp \\
		0 & \text{if } m\nmid l\end{cases}.$$
\end{proposition}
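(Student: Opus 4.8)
The plan is to compute the Jacquet module $r_{(l,n-l),(n)}(Z(\Delta))$ directly from the geometric lemma (Theorem \ref{10} and Proposition \ref{3}), applied with $\beta=(m,m,\ldots,m)$ ($b-a+1$ copies) and $\gamma=(l,n-l)$, using the fact that $Z(\Delta)$ sits inside the parabolically induced representation $i_{(n),\beta}(\nu^a\rho\otimes\nu^{a+1}\rho\otimes\cdots\otimes\nu^b\rho)$.

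First I would treat the case $m\nmid l$. Here $\beta\nleqq\gamma$, since a block of $\beta$ of size $m$ cannot fit inside the first block of $\gamma$ of size $l$ when $m\nmid l$ (one block of size $m$ must straddle the boundary between the two blocks of $\gamma$). Since $Z(\Delta)$ is a subrepresentation of $i_{(n),\beta}(\nu^a\rho\otimes\cdots\otimes\nu^b\rho)$ and the Jacquet functor $r_{(l,n-l),(n)}$ is exact, $r_{(l,n-l),(n)}(Z(\Delta))$ embeds in $r_{(l,n-l),(n)}(i_{(n),\beta}(\nu^a\rho\otimes\cdots\otimes\nu^b\rho))$, which vanishes by Corollary \ref{gem3}. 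Hence $r_{(l,n-l),(n)}(Z(\Delta))=0$.

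Next, for $l=mp$ with $0\le p\le b-a+1$, I would argue that $r_{(l,n-l),(n)}(Z(\Delta))$, being a subrepresentation of $r_{(l,n-l),(n)}(i_{(n),\beta}(\nu^a\rho\otimes\cdots\otimes\nu^b\rho))$, has all its irreducible subquotients of the form $\tau_1\otimes\tau_2$ where $\tau_1$ is supported on $\{\nu^a\rho,\ldots,\nu^b\rho\}$ acting on $\G_l$ and $\tau_2$ likewise on $\G_{n-l}$. One then shows that $r_{(l,n-l),(n)}(Z(\Delta))$ is irreducible and identifies it with $Z([\nu^a\rho,\nu^{p-1}\rho])\otimes Z([\nu^p\rho,\nu^b\rho])$. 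The cleanest route is adjunction: by Frobenius reciprocity (Theorem on adjointness), $\Hom_{\M}(r_{(l,n-l),(n)}(Z(\Delta)),Z([\nu^a\rho,\nu^{p-1}\rho])\otimes Z([\nu^p\rho,\nu^b\rho]))=\Hom_{\G_n}(Z(\Delta),Z([\nu^a\rho,\nu^{p-1}\rho])\times Z([\nu^p\rho,\nu^b\rho]))$, and the right-hand side is one-dimensional because $Z(\Delta)$ is by definition the unique irreducible subrepresentation of $\nu^a\rho\times\cdots\times\nu^b\rho\cong Z([\nu^a\rho,\nu^{p-1}\rho])\times Z([\nu^p\rho,\nu^b\rho])$ times possible corrections — more precisely one uses that $Z(\Delta)$ is the unique irreducible submodule and that $Z([\nu^a\rho,\nu^{p-1}\rho])\times Z([\nu^p\rho,\nu^b\rho])$ contains $Z(\Delta)$ as its unique irreducible submodule (the two smaller segments being linked, this induced product is not irreducible but has a well-understood socle). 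Combined with a count of lengths — the geometric lemma shows $r_{(l,n-l),(n)}(i_{(n),\beta}(\nu^a\rho\otimes\cdots\otimes\nu^b\rho))$ has exactly $\binom{b-a+1}{p}$ constituents permuting the characters, while $Z(\Delta)$ contributes just one via the ordered Weyl element — this pins down $r_{(l,n-l),(n)}(Z(\Delta))$ exactly.

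The main obstacle is establishing that the Jacquet module is \emph{irreducible} rather than merely containing $Z([\nu^a\rho,\nu^{p-1}\rho])\otimes Z([\nu^p\rho,\nu^b\rho])$: a priori the geometric lemma only gives an upper bound on constituents. I expect to resolve this by a dimension/central-character argument combined with the observation that $Z(\Delta)$ is generated by its unique lowest-weight line, so the image of that line in the Jacquet module generates an irreducible quotient, and then matching supports forces equality. Alternatively, one can invoke the known structure of $r_{\M}^{\G_n}$ on the Zelevinsky dual of a segment (a ``ladder'' representation, cf.\ the result of Kret and Lapid \cite{kret2012jacquet} mentioned in the introduction), but since the statement here is the elementary segment case it is cleaner to give the self-contained geometric-lemma argument above.
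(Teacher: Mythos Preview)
Your treatment of the case $m\nmid l$ is correct and coincides with the paper's argument.

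For $l=mp$, however, your irreducibility step is a genuine gap. The Frobenius reciprocity calculation does establish that $Z([\nu^a\rho,\nu^{p-1}\rho])\otimes Z([\nu^p\rho,\nu^b\rho])$ is a quotient of $r_{(l,n-l),(n)}(Z(\Delta))$, but your proposed methods for showing it is the \emph{only} constituent are not rigorous. The ``dimension/central-character'' and ``lowest-weight line'' ideas are vague in this $p$-adic setting, and the geometric-lemma count of $\binom{b-a+1}{p}$ constituents applies to the Jacquet module of the full induced representation $\nu^a\rho\times\cdots\times\nu^b\rho$, not of $Z(\Delta)$; you have no a priori way to decide which of those constituents belong to $Z(\Delta)$ rather than to other subquotients. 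Invoking Kret--Lapid would be circular, since their argument for ladders rests on exactly this segment case.

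The paper's argument is much more direct and avoids the obstacle entirely. It uses transitivity of Jacquet functors: with $\beta=(m,\ldots,m)$ and $\gamma=(l,n-l)$ one has $r_{\beta,\gamma}\circ r_{\gamma,(n)}(Z(\Delta))=r_{\beta,(n)}(Z(\Delta))=\nu^a\rho\otimes\cdots\otimes\nu^b\rho$, which is \emph{irreducible}. Since $r_{\beta,\gamma}$ is exact and every composition factor of $r_{\gamma,(n)}(Z(\Delta))$ has nonzero image under $r_{\beta,\gamma}$ (any such factor has cuspidal support a subset of $\{\nu^a\rho,\ldots,\nu^b\rho\}$ of the right size), it follows immediately that $r_{\gamma,(n)}(Z(\Delta))$ itself is irreducible. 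Writing it as $\sigma_1\otimes\sigma_2$ and applying $r_{\beta,\gamma}$ then identifies $\sigma_1$ and $\sigma_2$ by matching the minimal Jacquet modules. This is the missing idea you should use in place of your adjunction-and-counting strategy.
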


\begin{proof}
	Let $\pi=\nu^a\rho\times \nu^{a+1} \rho\times\cdots\times\nu^b\rho$ and  $ \beta=(m,m,\ldots,m)$ be a partition of $n$. The Corollary \ref{gem3} implies   $r_{\gamma ,(n)}(\pi)=0$ if $\beta \nleqq \gamma$.~In particular $r_{(l,n-l),(n)}(\pi)=0$ if $l$ is not divisible by $m$. This gives $r_{(l,n-l),(n)}(Z( \Delta) )=0$ for $m\nmid l$.
	Let $l=mp$ and $\gamma =(l,n-l)$ be a partition of $n.$ It easily follows that $r_{\beta,\gamma}(\sigma)\neq 0$ for each  composition factor $\sigma$ of  $r_{\gamma,(n)}(Z( \Delta) )$. Since
	$$r_{\beta,\gamma}\circ r_{\gamma,(n)}(Z(  \Delta))=r_{\beta,(n)}(Z(  \Delta))=\nu^a\rho\otimes \nu^{a+1}\rho\otimes\cdots\otimes\nu^b\rho$$
	is irreducible, $r_{\gamma,(n)}(Z(  \Delta))$ is irreducible. Therefore
	$$r_{\gamma,(n)}(Z(  \Delta))=\sigma_1\otimes\sigma_2;\hspace{1cm}\sigma_1\in \Irr\G_l,~\sigma_2\in \Irr\G_{n-l}.$$
	Moreover,
	$$ r_{\beta,\gamma}(\sigma_1\otimes\sigma_2)= r_{(m,\ldots, m),(l)}(\sigma_1)\otimes r_{(m,\ldots,m),(n-l)}(\sigma_2).\hspace{1.0cm}$$
	Hence,
	$$r_{(m,\ldots, m),(l)}(\sigma_1)=\nu^a\rho\otimes \nu^{a+1}\rho\otimes\cdots\otimes \nu^{p-1}\rho\hspace{1.3cm}$$
	and
	$$r_{(m,\ldots,m),(n-l)}(\sigma_2)=\nu^p\rho \otimes\cdots\otimes\nu^b\rho.\hspace{2.5cm} $$
	Note that $r_{\beta,(n)}(Z(\Delta))=\nu^a\rho\otimes\cdots\otimes\nu^b\rho.$
	Consequently, $$\sigma_1=Z( [\nu^a\rho,\nu^{p-1}\rho]),\sigma_2= Z(  [\nu^p\rho,\nu^b\rho]).$$  The proposition follows. 
\end{proof}
An immediate consequence of the above proposition can be established concerning the multiplicity-free Jacquet functor, providing insight into its behavior under certain conditions. Let $\Par_n$ be the subgroup of $\G_n$, consisting of the matrices having bottom row $\begin{pmatrix}
	0 & 0 & \cdots & 0 & 1
\end{pmatrix}$. Now, we prove the multiplicity-free Jacquet functor for certain types of representations.\\

\begin{theorem}
	If $\pi$ is an irreducible representation of  $\G_n$ such that it is irreducible as $\Par_n$-module, then the Jacquet module of $\pi$ with respect to maximal Levi subgroup is multiplicity-free.
\end{theorem}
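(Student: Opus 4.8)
The plan is to show that the hypothesis forces $\pi$ to be the irreducible representation attached to a \emph{single} segment, $\pi\cong Z(\Delta)$, and then to read the conclusion off Proposition~\ref{single}. The tool is the Bernstein--Zelevinsky theory of derivatives: there are exact functors $\Psi^{-},\Phi^{-}$ out of $\mathcal{M}(\Par_n)$ and $\Psi^{+},\Phi^{+}$ into $\mathcal{M}(\Par_n)$ with $\Phi^{-}\Phi^{+}=\mathrm{id}$ and $\Psi^{-}\Psi^{+}=\mathrm{id}$, and for every $\pi\in\mathcal{M}(\G_n)$ the restriction $\pi|_{\Par_n}$ carries a canonical filtration whose successive subquotients are $(\Phi^{+})^{k-1}\Psi^{+}(\pi^{(k)})$, $k=1,\dots,n$, where $\pi^{(k)}\in\mathcal{M}(\G_{n-k})$ is the $k$-th derivative. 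Each functor $(\Phi^{+})^{k-1}\Psi^{+}$ is exact, carries irreducibles to irreducibles, and carries nonzero representations to nonzero representations.

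Suppose $\pi|_{\Par_n}$ is irreducible. Then exactly one subquotient in the filtration is nonzero, so there is a unique $h\in\{1,\dots,n\}$ with $\pi^{(h)}\neq 0$; moreover $\pi^{(h)}$ is irreducible and $\pi|_{\Par_n}\cong(\Phi^{+})^{h-1}\Psi^{+}(\pi^{(h)})$. If $h=1$, then $\pi|_{\Par_n}=\Psi^{+}(\pi^{(1)})$ is inflated from $\G_{n-1}\cong\Par_n/N$, where $N$ is the unipotent radical of $\Par_n$; hence $N$ acts trivially on $\pi$, and since the $\G_n$-conjugates of $N$ generate $\mathrm{SL}_n$, the representation $\pi$ is one-dimensional. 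Any character of $\G_n$ has the form $Z(\Delta)$ with $\Delta=[\nu^{a}\rho,\nu^{b}\rho]$ for a character $\rho$ of $\G_1$, so this case is settled.

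For $h\geq 2$ we have $\pi^{(1)}=0$, i.e.\ $r_{(n-1,1),(n)}(\pi)=0$. Here one invokes Zelevinsky's explicit formula for the derivatives of $Z(\mathfrak{m})$: for a single segment $\Delta=[\nu^{a}\rho,\nu^{b}\rho]$ with $\rho$ supercuspidal of degree $m$, the only index $k\geq 1$ with $Z(\Delta)^{(k)}\neq 0$ is $k=m$, with $Z(\Delta)^{(m)}=Z([\nu^{a}\rho,\nu^{b-1}\rho])$ (read as $\mathbf{1}_{\G_0}$ when $a=b$), whereas $Z(\mathfrak{m})$ with two or more segments has at least two indices $k\geq 1$ with $Z(\mathfrak m)^{(k)}\neq 0$; combined with the uniqueness of $h$ this forces $\mathfrak{m}$ to be a single segment. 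An induction on $n$, using the reconstruction of an irreducible representation from its highest derivative together with $r_{(n-1,1),(n)}(\pi)=0$, makes this precise. In all cases $\pi\cong Z(\Delta)$, and in particular $\pi\in\mathcal{M}_{\Irr}(\G_n)$.

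Finally, apply Proposition~\ref{single} with $\M=\G_{l}\times\G_{n-l}$: one gets $r_{\M}^{\G_n}(\pi)=r_{(l,n-l),(n)}(Z(\Delta))$, which is either $0$ (if $m\nmid l$) or the single irreducible representation $Z([\nu^{a}\rho,\nu^{p-1}\rho])\otimes Z([\nu^{p}\rho,\nu^{b}\rho])$ (if $l=mp$); either way it is multiplicity free, which proves the theorem. (Equivalently, once $\pi\in\mathcal{M}_{\Irr}(\G_n)$ is known, Theorem~\ref{1} applies directly.) The main obstacle is the identification $\pi\cong Z(\Delta)$, i.e.\ the classification of the irreducible representations of $\G_n$ that remain irreducible on restriction to $\Par_n$: the case $h=1$ is elementary, but the case $h\geq 2$ genuinely uses the derivative machinery to exclude all multisegments with more than one segment.
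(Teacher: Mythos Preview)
Your proof is correct and follows exactly the paper's strategy: identify $\pi$ as $Z(\Delta)$ for a single segment, then read off multiplicity-freeness from Proposition~\ref{single}. The only difference is that the paper obtains the identification $\pi\cong Z(\Delta)$ by citing \cite[Corollary~7.9]{z80} as a black box, whereas you unpack that corollary via the Bernstein--Zelevinsky derivative filtration of $\pi|_{\Par_n}$; your sketch (unique nonzero derivative, the $h=1$ case giving characters, the $h\ge 2$ case forcing a single segment) is essentially Zelevinsky's own proof of that result, so nothing new is happening.
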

\begin{proof}
	
	If $\pi$ is any irreducible representation of  $\G_n$ such that it is irreducible as $\Par_n$-module, then by {\cite[Corollary 7.9]{z80}}, $\pi=Z(\Delta)$ for some segment $\Delta=[\nu^a\rho,\nu^b\rho]$, where $\rho$ is an irreducible supercuspidal representation of $\G_m$ and $a\leq b$ are integers such that $m(b-a+1)=n$. Let $\M=\G_l\times\G_{n-l}$ be a maximal Levi subgroup of $\G_n.$ Then by Proposition \ref{single},
	$$r_{\M}^{\G_n}(Z( \Delta) )=
	\begin{cases}
		Z( [\nu^a\rho,\nu^{p-1}\rho])\otimes Z( [\nu^p\rho,\nu^b\rho]) & \text{if } l=mp \\
		0 & \text{if } m\nmid l\end{cases}.$$
	It is easy to see that all components of $r_{\M}^{\G_n}(Z( \Delta) )$ are distinct. This implies that for any irreducible representations $\rho_1$ and $\rho_2$ of $\G_l$ and $\G_{n-l}$, respectively, $$\Dim_{\mathbb{C}} \Hom_{\M}(r_{\M}^{\G_n}(Z(\Delta)), \rho_1\otimes\rho_2) \leq 1.$$
	Hence, the result holds for $\pi.$
\end{proof}

\subsection{Jacquet module calculation}\label{section}
Let $\Delta_i=[\nu^{a_i}\rho_i,\nu^{b_i}\rho_i]$ be the segments, where $\rho_i$ is an irreducible supercuspidal representation of $\G_{m_i},$ for $i=1,2,\ldots,r$. Then $Z( \Delta_i)$ is an irreducible representation of $\G_{n_i}=\G_{k_im_i}$, where $k_i=b_i-a_i+1$ is length of segment $[\nu^{a_i}\rho_i,\nu^{b_i}\rho_i]$. Therefore,~$Z( \Delta_1)\times\cdots\times Z( \Delta_r) $ is a representation of $\G_n$.  First, we compute the Jacquet module of the representation of the form $Z( \Delta_1)\times\cdots\times Z( \Delta_r)$, i.e.,  $$r_{(l,n-l),(n)}(Z( \Delta_1)\times\cdots\times Z( \Delta_r)).$$
Note that here $\beta=(n_1,n_2,\ldots,n_r)$ and $\gamma=(l,n-l)$ are the the partitions of $n.$
For $ w\in W^{\beta,\gamma},$
we compute the composition
factors of $ \mathcal{F}_w(Z( \Delta_1)\times\cdots\times Z( \Delta_r))$ in terms of the matrix $B(w)=(b_{ij})\in M^{\beta,\gamma} $, where $ (b_{ij})$ is the matrix of order $r\times2$ with entries $(b_{i1})=l_i$ and $(b_{i2})=q_i$ such that $l_i+q_i=n_i,\sum_{i=1}^rl_i=l  $ and $\sum_{i=1}^rq_i=n-l  $.

By Proposition \ref{single}, $\JH^0(r_{(l_i,q_i),(n_i)}(Z( \Delta_i))=0~\text{or}~ \sigma_i^{(w)},$ where
$$\hspace{1.4cm}\sigma_i^{(w)}=Z(  [\nu^{a_i}\rho_i,\nu^{p_i^{(w)}-1}\rho_i]) \otimes Z(  [\nu^{p_i^{(w)}}\rho_i,\nu^{b_i}\rho_i])$$  where
\begin{itemize}
	\item $ a_i-1\leq p_i^{(w)}\leq b_i+1$
	\item $l_i=m_i(p_i^{(w)}-a_i)$
	\item $q_i=m_i(b_i-p_i^{(w)}+1)$
\end{itemize} $~\text{for}~ i=1,2,\ldots,r.$
Let
$$\sigma_1^{(w)}= Z(  [\nu^{a_1}\rho_1,\nu^{p_1^{(w)}-1}\rho_1]) \otimes\cdots\otimes Z(   [\nu^{a_r}\rho_r,\nu^{p_r^{(w)}-1}\rho_r])\hspace{0.3cm}$$\vspace{0.1cm} and
$$\sigma_2^{(w)}= Z( [\nu^{p_1^{(w)}}\rho_1,\nu^{b_1}\rho_1])\otimes\cdots\otimes Z( [\nu^{p_r^{(w)}}\rho_r,\nu^{b_r}\rho_r]).\hspace{0.7cm}$$
Then, by Equation \ref{g}, \begin{align}\label{7}
	\sigma(w)=i_{(l),(l_1,\ldots,l_r)}(\sigma_1^{(w)})\otimes i_{(n-l),(q_1,\ldots,q_r)}(\sigma_2^{(w)}).\hspace{1.3cm}
\end{align}
Therefore, by Proposition  \ref{10},
\begin{align}\label{8}
	r_{(l,n-l),(n)}(Z( \Delta_1)\times\cdots\times Z( \Delta_r))= \sum_{B(w)\in M^{\beta,\gamma}} \sigma(w).
\end{align}	

We computed the Jacquet 
module of any $\pi=Z( \Delta_1)\times\cdots\times Z( \Delta_r)\in\mathcal{M}(\G_n).$ Now, we prove that if the Jacquet functor $r_{\M}^{\G_n}$ is restricted to $\mathcal{M}_{\Irr}(\G_n)$, then it is multiplicity free, i.e., the Jacquet functor $r_{\M}^{\G_n}:\mathcal{M}_{\Irr}(\G_n)\rightarrow \mathcal{M}(\M)$ is multiplicity free.

\subsection{Proof of the Theorem \ref{1}}
\begin{proof}
	Consider $\pi=Z( \Delta_1)\times\cdots\times Z( \Delta_r)\in\mathcal{M}_{\Irr}(\G_n)$ and an irreducible representation $\rho=\tau_1\otimes\tau_2 \in\mathcal{M}(\M)$, where  $\tau_1\in\mathcal{M}_{\Irr}(\G_l) 
	$
	and $\tau_2\in \mathcal{M}_{\Irr}(\G_{n-l}) $.
	Keeping the notations of  \S\ref{section}, by Theorem \ref{linked}, all $\Delta_i $'s are not linked as $\pi$ is an irreducible representation. Hence, by Lemma \ref{remark}, the segments
	$[\nu^{a_i}\rho_i,\nu^{p_i^{(w)}-1}\rho_i]$ and $  [\nu^{p_i^{(w)}}\rho_i,\rho_i^{b_i}]$ are also not linked. As a result,
	$$Z(  [\nu^{a_1}\rho_1,\nu^{p_1^{(w)}-1}\rho_1]) \times\cdots\times Z(   [\nu^{a_r}\rho_r,\nu^{p_r^{(w)}-1}\rho_r])=\sigma_1^{(w)'} ~(\text{say})$$ and
	$$Z( [\nu^{p_1^{(w)}}\rho_1,\nu^{b_1}\rho_1])\times\cdots\times Z( [\nu^{p_r^{(w)}}\rho_r,\nu^{b_r}\rho_r])=\sigma_2^{(w)'}~(\text{say})$$ are irreducible representations of $\G_l$ and $\G_{n-l},$ respectively. The representations $\sigma_i^{(w)'}$ for $i=1,2;$ depends on $B(w)\in M^{\beta,\gamma}$ and all matrices $B(w)$  are distinct. This implies that all the factors in  $r_{(l,n-l),(n)}(Z( \Delta_1)\times\cdots\times Z( \Delta_r))$ are distinct and has no repetitions. So, by the Theorem  \ref{5}, we have $$\Dim_{\mathbb{C}}\Hom_{\G_l\times \G_{n-l}}\Bigl(\sum_{B(w)\in M^{\beta,\gamma}} \sigma(w),\tau_1\otimes\tau_2\Bigr)$$ $$\hspace{1cm}=\sum_{B(w)\in M^{\beta,\gamma}} \Dim_{\mathbb{C}}\Hom_{\G_l\times \G_{n-l}}\Bigl( \sigma(w),\tau_1\otimes\tau_2\Bigr)\leq 1.$$	
	Therefore, the theorem follows.
\end{proof}

\subsection{Acknowledgement:} The first author gratefully 
acknowledges the Ministry of Human Resource Development (MHRD), Govt. of India, for providing the necessary funding and fellowship to pursue this research work. 
\section{Declaration}
\textbf{Declaration for Research Article:}
We, the authors of the research article ``A Note On Jacquet Modules of General Linear Groups", declare that this article represents original work and has not been published previously, nor is it under consideration for publication elsewhere. We take full responsibility for the content of this article and affirm that it does not contain any defamatory or unlawful material.

\bibliography{jacq}
\bibliographystyle{alpha}
\end{document}